\newcommand{\circledOne}{\text{\ding{172}}}
\newcommand{\circledTwo}{\text{\ding{173}}}
\newcommand{\circledThree}{\text{\ding{174}}}
\DeclareMathOperator*{\argmin}{arg\,min}
\newcommand{\dotprod}[2]{\langle #1,#2 \rangle} 
\begin{document}
\title{Zero-Order Stochastic Conditional Gradient Sliding Method for Non-smooth Convex Optimization\thanks{The work was supported by the Ministry of Science and Higher Education of the Russian Federation (Goszadaniye) 075-00337-20-03, project No. 0714-2020-0005.}}
\titlerunning{Zero-Order Stochastic Conditional Gradient Sliding Method}
%
\author{Aleksandr Lobanov\inst{1,2,3}\orcidID{0000-0003-1620-9581} \and
Anton Anikin\inst{4}\orcidID{0000-0002-7681-2481} \and
Alexander Gasnikov\inst{1,5,6}\orcidID{0000-0002-7386-039X} \and
Alexander Gornov\inst{4}\orcidID{0000-0002-8340-5729}
\and
Sergey Chukanov\inst{7}}
\authorrunning{A. Lobanov et al.}
%
\institute{Moscow Institute of Physics and Technology, Dolgoprudny, Russia\\ 
\email{lobanov.av@mipt.ru},
\email{gasnikov.av@mipt.ru}\\
\and 
 ISP RAS Research Center for Trusted Artificial Intelligence, Moscow, Russia
 \and
 Moscow Aviation Institute, Moscow, Russia\\
\and 
Matrosov Institute for System Dynamics and Control Theory, Irkutsk, Russia\\
\email{anikin@icc.ru},
\email{gornov@icc.ru}\\
\and Institute for Information Transmission Problems RAS, Moscow, Russia \and Caucasus Mathematical Center, Adyghe State University, Maikop, Russia \and Federal Research Center "Computer Science and Control" RAS, Moscow, Russia\\
\email{chukanov47@mail.ru}
}
\maketitle              
\begin{abstract}
The conditional gradient idea proposed by Marguerite Frank and Philip Wolfe in 1956 was so well received by the community that new algorithms (also called Frank--Wolfe type algorithms) are still being actively created. In this paper, we study a non-smooth stochastic convex optimization problem with constraints. Using a smoothing technique and based on an accelerated batched first-order Stochastic Conditional Gradient Sliding method, we propose a novel gradient-free Frank--Wolfe type algorithm called Zero-Order Stochastic Conditional Gradient Sliding (ZO-SCGS). This algorithm is robust not only for the class of non-smooth problems, but surprisingly also for the class of smooth black box problems, outperforming the SOTA algorithms in the smooth~case~in~term oracle calls. In practical experiments we confirm our theoretical~results.

\keywords{Frank--Wolfe type algorithms  \and Non-smooth convex optimization \and Gradient-free method.}
\end{abstract}
\section{Introduction}
    The history of the conditional gradient method begins with the Frank--Wolfe algorithm proposed in 1956 \cite{Frank_Wolfe_1956}. Marguerite Frank and Philip Wolfe proposed an alternative to the gradient descent method for solving a class of quadratic constrained optimization problems that uses linear optimization on a convex compact set, avoiding projection. A little later in 1966, Evgenii Levitin and Boris Polyak in~\cite{Levitin_Polyak_1966} investigated the Frank--Wolfe method (named Conditional Gradient), obtaining the rate of convergence and showed that this rate is optimal for the class of smooth convex problems and for all algorithms that use linear minimization oracle.
    Since then, the conditional gradient algorithm has~gained much interest in the community, because in some cases it is computationally cheaper to solve the linear minimization problem over the feasible set (thereby guaranteeing a presence over the feasible set) than to perform a projection over the feasible set. Currently, the conditional gradient method is actively used in~solving practical problems of network routing~\cite{LeBlanc_1985,Mitradjieva_2013,Gasnikov_2020}, matrix completion \cite{Freund_2017,Garber_2022}, as well as in problems of machine learning~\cite{Jaggi_2011,Negiar_2020}, federated learning~\cite{Dadras_2022}, online optimization~\cite{Hazan_2016,Chen_2019,Garber_2020}, standard optimization~\cite{Goldfarb_2017,Mokhtari_2020,Garber_2021} and huge-scale optimization~\cite{bubeck2015,cox2017,anikin2022efficient}. 
    
    However, as far as we know, there are no gradient-free algorithms (based on the conditional gradient method) to solve the black box problem in the non-smooth case. Where the black box problem means that only the zero-order oracle~\cite{Rosenbrock_1960} is available to us, i.e. we have access to the value of the objective function, not its gradient. This class of problems is a particular case of the practical problems above, when the gradient calculation procedure is too expensive~\cite{Saha_2011,Akhavan_2020} or not available at all \cite{Bubeck_2019,Dvinskikh_2022}. Already in November 2022, a survey appeared~\cite{Gasnikov_2022}, which provides various techniques for creating optimal gradient-free algorithms (based generally on accelerated batched first-order methods) to solve the black-box problem. The optimal for a gradient-free algorithm is usually understood by three criteria: iteration complexity, oracle complexity, and maximum level of~adversary noise. Thus, by choosing the accelerated batched conditional gradient method and using the smoothing technique from the survey, it is possible to develop a gradient-free algorithm to solve black-box problem~in~non-smooth~case.
    
    In this paper, we focus on black-box problems in the non-smooth case, namely, non-smooth convex stochastic optimization problems. To solve this problem, we use a smoothing scheme approach with $l_2$ randomization. Based on the accelerated batched conditional gradient method, also known as the Stochastic Conditional Gradient Sliding Method from \cite{Lan_2016}, we create an algorithm and derive optimal estimates: iteration complexity, oracle complexity, and maximum adversary noise level. As far as we know, this is the first gradient-free algorithm for solving a non-smooth convex optimization problem. We show in theory that Zero-Order Stochastic Conditional Gradient Sliding Method outperforms the oracle complexity of gradient-free algorithms (which are state of the art algorithms) in a smooth setting, which is a surprising fact. In practical experiments we confirm our theoretical results.
    
    \subsection{Our Contributions}
    Our contributions can be summarized as:
    \begin{itemize}
        \item[$\bullet$]  We present the first gradient-free algorithm based on the conditional gradient method "Zero-Order Stochastic Conditional Gradient Sliding Method" (ZO-SCGS) for solving a non-smooth convex stochastic optimization problem with constraints.
        
        \item[$\bullet$] Our theoretical results show that the algorithm is robust for black-box problems not only in the non-smooth case, but also for the smooth setting case. That is, our algorithm outperforms state of the art algorithms on oracle calls. In particular, the SOTA algorithm Zero-Order Conditional Gradient Method (ZSCG) from \cite{Balasubramanian_2022} has an estimation of oracle complexity $\sim \varepsilon^{-3}$, while our algorithm has an estimation of oracle complexity $\sim \varepsilon^{-2}$.
        
        \item[$\bullet$] We empirically test our theoretical results by comparing the Zero-Order Stochastic Conditional Gradient Sliding Method (ZO-SCGS) with the Zero-Order Conditional Gradient Method (ZSCG) on a model case in a smooth setting. We explain the reason for the advantage of the proposed algorithm.
    \end{itemize}
    \setcounter{footnote}{3}
    \subsection{Paper Organization}
    This paper has the following structure. In Section \ref{section:Related_Works} we provide related works. In Section \ref{section:Setup} we consider the formulation of the problem. We present the novel gradient-free algorithm in Section \ref{section:Main_Result}. In Section \ref{section:Discussion} we discuss the theoretical results obtained. We verify our results with a model experiment in Section \ref{section:Experiments}. While Section \ref{section:Conclusion} concludes the paper. We provide a detailed proof of the Theorem~\ref{theorem_1} in the supplementary materials (Appendix \ref{Appendix})\footnote{The full version of this article, which includes the Appendix \ref{Appendix} can be found at the~following link: \url{https://arxiv.org/abs/2303.02778}.}.
    
\section{Related Works}\label{section:Related_Works}
    \textbf{Conditional gradient methods.}\;\;\; There are many works \cite{Goldfarb_2017,Mokhtari_2020,Lan_2016,Balasubramanian_2022,Hazan_2016_June,Yurtsever_2019,Zhang_2020,Duchi_2011,McMahan_2010,Combettes_2020} in the field of conditional gradient methods research. The latest research results in this area are presented in a recent survey on conditional gradient methods \cite{Braun_2022}. For instance, the Stochastic Frank--Wolf algorithm from \cite{Hazan_2016_June}, which is a generalization of the Frank--Wolf algorithm to stochastic optimization by replacing the gradient in the update with its stochastic approximation, requires $\sim \varepsilon^{-3}$ calls of stochastic gradients and performing $\sim \varepsilon^{-1}$ linear optimization. Also, for instance, the Stochastic Away Frank--Wolfe algorithm from~\cite{Goldfarb_2017}, which is derived from combining the Away-Step Stochastic Frank--Wolfe algorithm \cite{Guelat_1986} and the Pairwise Stochastic Frank--Wolfe algorithm \cite{Lacoste_2015}, requires~$\sim \varepsilon^{-4} \log^6 \left(\varepsilon^{-1}\right) $ calls of stochastic gradients and performing~$\sim \log \left(\varepsilon^{-1}\right)$ linear optimization. In another work \cite{Mokhtari_2020}, the Momentum Stochastic Frank--Wolf algorithm, which is obtained from the Stochastic Frank–-Wolfe algorithm by replacing the gradient estimator with the momentum estimator, requires~$\sim \varepsilon^{-3}$ calls of stochastic gradients and linear optimization. And in \cite{Lan_2016} the Stochastic Conditional Gradient Sliding algorithm was proposed, which is an accelerated batched method and requires $\sim \varepsilon^{-2}$ calls of stochastic gradients and performing $\sim \varepsilon^{-1}$ linear optimization. The above algorithms solve the problem of convex stochastic optimization and are first-order methods, but the Zeroth-Order Stochastic Conditional Gradient Method from \cite{Balasubramanian_2022}, which solves the black box problem in the smooth case, requires $\sim \varepsilon^{-3}$ calls of stochastic gradients and performing $\sim \varepsilon^{-1}$ linear optimization. In this paper, we choose the accelerated batched first order method: Stochastic Conditional Gradient Sliding algorithm from \cite{Lan_2016} as the basis for creating a novel gradient-free algorithm, since it has the best number of stochastic gradient calls presented. We will compare the efficiency of our algorithm to the Zeroth-Order Stochastic Conditional Gradient Method from \cite{Balasubramanian_2022}, which is one of the SOTA algorithms.\\
    \textbf{Gradient-free methods.}\;\;\; The research field of gradient-free algorithms can be traced back to at least 1952 \cite{Kiefer_1952}. Recent works \cite{Dvinskikh_2022,Vaswani_2019,Dvurechensky_2021,Bach_2016,Gasnikov_ICML,Akhavan_2022,Lobanov_2022} are heavily focused on creating optimal gradient-free algorithms based on three criteria: iteration complexity, oracle complexity, and maximum level of adversary noise. For black-box problems, a gradient approximation is usually used instead of an exact gradient in first-order algorithms. For instance, work~\cite{Vaswani_2019} investigated gradient approximation via coordinate-wise randomization, and work~\cite{Dvurechensky_2021} investigated gradient approximation via random search randomization. Also, for instance, in \cite{Bach_2016} the gradient approximation via a "kernel-based" approximation is studied, the feature of which is to take into account the advantages for the case of increased smoothness. Some works use smoothing schemes via $l_1$ or $l_2$ randomization. For instance, paper \cite{Akhavan_2022} studied $l_1$ randomization as an alternative to the exact gradient for solving smooth optimization problems. Another paper~\cite{Gasnikov_ICML} explained the advantages of solving non-smooth problems using a smoothing scheme with $l_2$ randomization. And in \cite{Lobanov_2022} the smoothing scheme through $l_1$ randomization for non-smooth optimization problems is investigated and it is shown that in practice there are no significant advantages of $l_1$ randomization over $l_2$ randomization. In this paper, we use a smoothing scheme with $l_2$ randomization to create a gradient-free algorithm for solving a non-smooth convex stochastic optimization problem.

\section{Setup}\label{section:Setup}
    We study a non-smooth convex stochastic optimization problem with constraints
    \begin{equation}\label{init_problem}
        f^* := \min_{x\in Q} \left[ f(x):= \mathbb{E}_\xi \left[ f(x,\xi) \right] \right]
    \end{equation}
    where $Q \subseteq \mathbb{R}^d$ is a convex compact set and $f: Q \rightarrow \mathbb{R}$ is a convex function. This problem is also known as the black box problem, where a zero-order (gradient-free) oracle returns a function value $f(x,\xi)$ at the requested point $x$, possibly with some adversarial noise $\delta(x)$. We now formally introduce the definition of a gradient-free oracle.
    \begin{definition}[Gradient-free oracle]\label{def:gradient_free_oracle}
        Let gradient-free oracle returns a noise value of $f(x,\xi)$, i.e. for all $x \in Q$
        \begin{equation*}
            f_\delta(x,\xi) := f(x,\xi) + \delta(x). 
        \end{equation*}
    \end{definition}
    Next, we consider the assumptions we use in our theoretical results.
    \subsection{Assumptions}
    We assume that the function is Lipschitz continuous and is convex on set~$Q_\gamma$.
    \begin{assumption}[Lipschitz continuity of the function]\label{ass:lipschitz_continuity}
        Function $f(x, \xi)$ is Lipschitz continuous with constant $M$, i.e. for all $x,y \in Q$:
        \begin{equation*}
            |f(y,\xi) - f(x, \xi)| \leq M(\xi) \| y - x \|_p.
        \end{equation*}
        Moreover, there exists a positive constant $M$ such that $\mathbb{E} \left[ M^2(\xi) \right] \leq M^2$. 
    \end{assumption}
    \begin{assumption}[Convexity on the set $Q_\gamma$]\label{ass:convexity_Q_gamma} Let $\gamma > 0$ a small number to be defined later and $Q_\gamma := Q + B_2^d(\gamma)$, then the function $f$ is convex on the set $Q_\gamma$.
    \end{assumption}
    
    We also assume that adversarial noise is bounded.
    \begin{assumption}[Boundedness of noise]\label{ass:bound_noise} For all $x \in Q$, it holds $|\delta(x)| \leq \Delta$.
    \end{assumption}
    
    Our Assumption \ref{ass:lipschitz_continuity} of a Lipschitz continuity of the function is similar as in~\cite{Gasnikov_ICML} and generalizes to a stochastic setting. For the special case when $p=2$ we use the notation $M_2$ for the Lipschitz constant (see e.g. \cite{Dvinskikh_2022}). Assumption \ref{ass:convexity_Q_gamma} is quite standard in the literature (see e.g. \cite{Yousefian_2012,Risteski_2016}). We used $l_2$-ball here since we use $l_2$~randomization in this paper. In more general the formulation of the assumption depends on the choice of gradient approximation (see e.g. \cite{Lobanov_2022}). So much prior work in the context of stochastic optimization often assumed the boundedness of stochastic or deterministic noise (such as e.g. \cite{Akhavan_2020,Stich_2020,Vasin_2021}). In Assumption \ref{ass:bound_noise}, we consider bounded deterministic noise.
    
    \subsection{Notation}
    We use $\dotprod{x}{y}:= \sum_{i=1}^{d} x_i y_i$ to denote standard inner product of $x,y \in \mathbb{R}^d$, where $x_i$ and $y_i$ are the $i$-th component of $x$ and $y$ respectively. We denote $l_p$-norms (for~$p \geq 1$) in $\mathbb{R}^d$ as $\| x\|_p := \left( \sum_{i=1}^d |x_i|^p \right)^{1/p}$. Particularly for $l_2$-norm in $\mathbb{R}^d$ it follows $\| x \|_2 := \sqrt{\dotprod{x}{x}}$. We denote $l_p$-ball as $B_p^d(r):=\left\{ x \in \mathbb{R}^d : \| x \|_p \leq r \right\}$ and $l_p$-sphere as $S_p^d(r):=\left\{ x \in \mathbb{R}^d : \| x \|_p = r \right\}$. Operator $\mathbb{E}[\cdot]$ denotes full mathematical expectation. We notation $\tilde{O} (\cdot)$ to hide logarithmic factors. To define the diameter of the set $Q$ we introduce $D := \max_{x,y \in Q} \| x - y\|_p$.
    
\section{Main Result}\label{section:Main_Result}
    In this section, we present a novel algorithm (see Algorithm \ref{alg:ZOSCGS}) that is optimal in terms of iterative complexity, the number of gradient-free oracle calls, and the maximum value of adversarial noise. This algorithm is based on an accelerated first-order Stochastic Conditional Gradient Sliding (SCGD) method from \cite{Lan_2016}. This section is structured as follows: in Subsection \ref{Subsection:smoothing_intuition} we introduce the basic idea of the smoothing scheme, in Subsections \ref{Subsection:smoothing_approximation} and \ref{Subsection:l2_randomization} we consider the main elements of the smoothing scheme via $l_2$ randomization, and in Subsection \ref{Subsection:Zero_Order_Method} we present the new gradient-free method (see Algorithm \ref{alg:ZOSCGS} for more details).  \\
    We start with the main idea of solving problem \eqref{init_problem} via the smoothing scheme.

    \subsection{Smoothing scheme intuition}\label{Subsection:smoothing_intuition}
    The main idea of solving problem \eqref{init_problem} via the smoothing scheme is to replace the problem. That is, instead of solving the non-smooth problem we will solve its smoothed problem: 
    \begin{equation}\label{problem_smoothed}
        \min_{x \in Q} f_\gamma (x),
    \end{equation}
    where $f_\gamma$ a smooth approximation of the non-smooth function $f$, which we define below. Thus, to solve the smooth problem \eqref{problem_smoothed} it is sufficient to choose the accelerated batched algorithm $\textbf{A}(L_{f_\gamma},\sigma^2)$. Next, we introduce the assumptions of smoothness of the function $f_\gamma$ and bounded variance of the gradient~$\nabla f_\gamma(x,\psi)$.
    
    \begin{assumption}[$L_{f_\gamma}$-smoothness] \label{ass:L_smoothness}
        Function $f_\gamma(x)$ is differentiable and there exists a constant $L_{f_\gamma} \geq 0$ such that for $x,y \in Q$:
        \begin{equation*}
            \| \nabla f_\gamma (y) - \nabla f_\gamma(x) \|_q \leq L_{f_\gamma} \| y - x \|_p.
        \end{equation*}
    \end{assumption}
    
    \begin{assumption}[Bounded variance and unbiased]\label{ass:Bounded_variance}
        Gradient~$\nabla f_\gamma(x, \xi)$ has bounded variance such that for $x \in Q$:
        \begin{equation*}
            \mathbb{E}_{\psi}\left[ \| \nabla_x f_\gamma(x, \psi) - \nabla f_\gamma(x) \|^2_q \right] \leq \sigma^2, \;\;\; \mathbb{E}_{\psi}\left[ \nabla f_\gamma(x,\psi) \right] = \nabla f_\gamma(x).
        \end{equation*}
    \end{assumption}

    Assumptions \ref{ass:L_smoothness} and \ref{ass:Bounded_variance} are quite common in the literature (see e.g. \cite{Lan_2016,Stich_2020,Gorbunov_2020}). Here $q$ is such that $1/p + 1/q = 1$. And a random variable~$\psi$~we define below. 
    
    The connection between Problems \eqref{init_problem} and \eqref{problem_smoothed} is as follows: to solve a non-smooth problem with $\varepsilon$-accuracy, it is necessary to solve a smooth problem with $(\varepsilon/2)$-accuracy, where $\varepsilon$-suboptimality is the accuracy of the solution in terms of expectation (see Appendix \ref{Appendix} for the proof of this statement). So, to solve Problem \eqref{init_problem} (under the Assumption \ref{ass:L_smoothness} and \ref{ass:Bounded_variance}) with Algorithm $\textbf{A}(L_{f_\gamma},\sigma^2)$, we need to know the gradient of the smoothed function $\nabla f_\gamma(x,\psi)$, $L_{f_\gamma}$-smoothness constant, and the variance estimate $\sigma^2$.
    
    In the following subsections we will define these elements.
    
    \subsection{Smooth approximation}\label{Subsection:smoothing_approximation}
    Since problem \eqref{init_problem} is non-smooth, we introduce a smooth approximation of the non-smooth function $f$ as follows:
    \begin{equation}\label{f_gamma}
        f_\gamma(x) := \mathbb{E}_{\tilde{e}} \left[ f(x + \gamma \tilde{e}) \right],
    \end{equation}
    where $\gamma > 0$ is smoothing parameter, $\tilde{e}$ is random vector uniformly distributed on $B_2^d(\gamma)$. Here $f_\gamma(x) := \mathbb{E} \left[ f(x, \xi) \right]$. The following lemma provides the connection between non-smooth function $f$ and smoothed function $f_\gamma$.
    
    \begin{lemma}\label{Lemma:connect_f_with_f_gamma}
        Let Assumptions \ref{ass:lipschitz_continuity}, \ref{ass:convexity_Q_gamma} it holds, then for all $x \in Q$ we have
        \begin{equation*}
            f(x)\leq f_\gamma(x) \leq f(x) + \gamma M_2.
        \end{equation*}
    \end{lemma}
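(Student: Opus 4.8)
The plan is to prove the two inequalities separately: the lower bound will come from convexity via Jensen's inequality, and the upper bound from Lipschitz continuity, with Assumption~\ref{ass:convexity_Q_gamma} playing the role of guaranteeing that every perturbed argument stays inside the enlarged domain $Q_\gamma$ where these properties are available. First I would record the elementary geometric fact underlying both bounds: for any $x \in Q$ the smoothing vector has $l_2$-norm controlled by $\gamma$, so that the perturbation $\gamma\tilde e$ satisfies $\|\gamma\tilde e\|_2 \le \gamma$, hence
\begin{equation*}
    x + \gamma\tilde e \in Q + B_2^d(\gamma) = Q_\gamma.
\end{equation*}
This inclusion is precisely what licenses invoking convexity (Assumption~\ref{ass:convexity_Q_gamma}) and Lipschitz continuity (Assumption~\ref{ass:lipschitz_continuity}) at the smoothed points rather than only on $Q$.

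For the lower bound $f(x) \le f_\gamma(x)$, I would exploit that $\tilde e$ is drawn from a distribution that is symmetric about the origin, so $\mathbb{E}_{\tilde e}[\gamma\tilde e] = 0$ and therefore $\mathbb{E}_{\tilde e}[x + \gamma\tilde e] = x$. Applying Jensen's inequality to $f$, which is convex on $Q_\gamma$, then yields
\begin{equation*}
    f_\gamma(x) = \mathbb{E}_{\tilde e}\big[ f(x + \gamma\tilde e) \big] \ge f\big( \mathbb{E}_{\tilde e}[x + \gamma\tilde e] \big) = f(x).
\end{equation*}

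For the upper bound $f_\gamma(x) \le f(x) + \gamma M_2$, I would write the increment as an expectation and bound it pointwise using the Lipschitz estimate with $p = 2$:
\begin{equation*}
    f_\gamma(x) - f(x) = \mathbb{E}_{\tilde e}\big[ f(x + \gamma\tilde e) - f(x) \big] \le \mathbb{E}_{\tilde e}\big[ M_2 \, \|\gamma\tilde e\|_2 \big] \le \gamma M_2.
\end{equation*}
Here the step that the averaged function $f(\cdot) = \mathbb{E}_\xi[f(\cdot,\xi)]$ is itself $M_2$-Lipschitz in the $l_2$ norm has to be justified by taking expectation over $\xi$ in Assumption~\ref{ass:lipschitz_continuity} and using $\mathbb{E}[M^2(\xi)] \le M^2$ together with Jensen (or Cauchy--Schwarz) to pass from $\mathbb{E}_\xi[M(\xi)]$ to the constant~$M_2$.

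The computations are short, so the main thing to keep straight is the bookkeeping of the two independent sources of randomness, $\xi$ and $\tilde e$: the lower bound is a Jensen argument in $\tilde e$ applied to the already-averaged convex function $f$, while the upper bound requires first converting the stochastic Lipschitz constant $M(\xi)$ into the deterministic $M_2$ before averaging over $\tilde e$. I expect the only genuine subtlety to be the boundary issue handled above, namely ensuring the perturbed points lie in $Q_\gamma$ so that both assumptions apply; once that inclusion is in place, the two estimates combine directly to give the claimed sandwich.
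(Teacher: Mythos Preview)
Your proposal is correct and follows essentially the same approach as the paper: convexity for the lower bound and $M_2$-Lipschitz continuity for the upper bound. The only cosmetic difference is that for the lower bound you invoke Jensen's inequality directly on the convex function $f$, whereas the paper linearizes via the first-order inequality $f(x+\gamma\tilde e)\ge f(x)+\langle\nabla f(x),\gamma\tilde e\rangle$ and then uses $\mathbb{E}_{\tilde e}[\tilde e]=0$; your version is arguably cleaner since $f$ is non-smooth and need not be differentiable, and you are also more explicit about the inclusion $x+\gamma\tilde e\in Q_\gamma$ and the passage from $M(\xi)$ to $M_2$, which the paper leaves implicit.
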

    \begin{proof}
    For the first inequality we use the convexity of the function $f(x)$
    \begin{equation*}
        f_\gamma(x) = \mathbb{E}_{\Tilde{e}} \left[ f(x + \gamma \Tilde{e}) \right] \geq \mathbb{E}_{\Tilde{e}} \left[ f(x) + \dotprod{\nabla f(x)}{\gamma \Tilde{e}}) \right] = \mathbb{E}_{\Tilde{e}} \left[ f(x) \right] = f(x).
    \end{equation*}
    For the second inequality we have
    \begin{eqnarray*}
        | f_\gamma (x)- f(x) | = | \mathbb{E}_{\Tilde{e}} \left[ f(x + \gamma \Tilde{e}) \right] - f(x) | &\leq& \mathbb{E}_{\Tilde{e}} \left[ | f(x + \gamma \Tilde{e}) - f(x) | \right]\\
        &\leq& \gamma M_2 \mathbb{E}_{\Tilde{e}} \left[ \| \Tilde{e} \|_2 \right] \leq \gamma M_2,
    \end{eqnarray*}
    using the fact that $f$ is $M_2$-Lipschitz function.
    \end{proof}\qed
    
    The next lemmas provide properties of the smoothed function $f_\gamma$.
    \begin{lemma}\label{Lemma:M_lipschitz_continuity}
        Let Assumptions \ref{ass:lipschitz_continuity}, \ref{ass:convexity_Q_gamma} it holds, then for $f_\gamma(x)$ from \eqref{f_gamma} we have
        \begin{equation*}
            |f_\gamma(y) - f_\gamma(x) | \leq M \| y - x \|_p, \;\;\; \forall x,y \in Q.
        \end{equation*}
    \end{lemma}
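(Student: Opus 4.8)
The plan is to exploit the fact that the smoothing operation $f \mapsto f_\gamma$ averages $f$ over a fixed translation distribution, and averaging commutes with translation, so it cannot enlarge the Lipschitz constant. First I would reduce the stochastic Lipschitz bound of Assumption \ref{ass:lipschitz_continuity} to a deterministic one for the expected objective $f(x) = \mathbb{E}_\xi[f(x,\xi)]$. Taking expectations over $\xi$ in the pointwise bound and moving the absolute value inside via the triangle inequality for expectations gives $|f(y)-f(x)| \le \mathbb{E}_\xi[M(\xi)]\,\|y-x\|_p$, and then by Jensen (equivalently Cauchy--Schwarz) $\mathbb{E}_\xi[M(\xi)] \le \sqrt{\mathbb{E}_\xi[M^2(\xi)]} \le M$. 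Hence $f$ itself is $M$-Lipschitz in the $\|\cdot\|_p$ norm.

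Next I would write out the difference of the smoothed values directly from the definition \eqref{f_gamma}, noting that the same random direction $\tilde{e}$ enters both terms:
$$|f_\gamma(y)-f_\gamma(x)| = \left| \mathbb{E}_{\tilde{e}}\left[ f(y+\gamma\tilde{e}) - f(x+\gamma\tilde{e})\right]\right| \le \mathbb{E}_{\tilde{e}}\left[ \left| f(y+\gamma\tilde{e}) - f(x+\gamma\tilde{e})\right| \right].$$
Applying the $M$-Lipschitz bound established in the previous step to the two shifted arguments, the common shift $\gamma\tilde{e}$ cancels inside the norm, leaving $\mathbb{E}_{\tilde{e}}\left[ M\|y-x\|_p\right] = M\|y-x\|_p$, which is exactly the claimed estimate. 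No further computation is needed beyond linearity of the expectation.

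The one genuine subtlety — and the step I would be most careful with — is that the shifted points $x+\gamma\tilde{e}$ and $y+\gamma\tilde{e}$ need not lie in $Q$ but only in $Q_\gamma = Q + B_2^d(\gamma)$, so I must invoke the Lipschitz property on $Q_\gamma$ rather than on $Q$. This is precisely the regime that Assumption \ref{ass:convexity_Q_gamma} (together with the bounded support of the randomization) is set up to accommodate, and it is consistent with the way Lemma \ref{Lemma:connect_f_with_f_gamma} was already argued, where the Lipschitz bound was applied at $x$ and $x+\gamma\tilde{e}$. Everything else is a routine combination of the triangle inequality and the translation invariance of the increment, so I do not expect any real difficulty beyond this domain bookkeeping.
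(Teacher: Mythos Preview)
Your proposal is correct and follows essentially the same approach as the paper: move the absolute value inside the expectation over $\tilde e$, apply the $M$-Lipschitz bound to the shifted arguments, and cancel the common shift. If anything, you are more careful than the paper, which simply invokes ``$M$-Lipschitz continuity of $f$'' without spelling out the passage from the per-sample bound $\mathbb{E}[M^2(\xi)]\le M^2$ to a deterministic Lipschitz constant for the expected objective, and without explicitly noting the $Q_\gamma$ domain issue.
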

    \begin{proof} Using $M$-Lipschitz continuity of function $f$ we obtain
        \begin{eqnarray*}
            | f_\gamma (y)- f_\gamma(x) | \leq \mathbb{E}_{\Tilde{e}} \left[ | f(y + \gamma \Tilde{e})  - f(x + \gamma \Tilde{e}) | \right] \leq M \| y - x \|_p.
        \end{eqnarray*}
    \end{proof}\qed
    
    
    \begin{lemma}[Theorem 1, \cite{Gasnikov_ICML}]\label{Lemma:Lipschitz_gradient}
        Let Assumptions \ref{ass:lipschitz_continuity}, \ref{ass:convexity_Q_gamma} it holds, then $f_\gamma(x)$ has $L_{f_\gamma} = \frac{\sqrt{d}M}{\gamma}$-Lipschitz gradient
        \begin{equation*}
            \| \nabla f_\gamma(y) - \nabla f_\gamma(x) \|_q \leq L_{f_{\gamma}} \| y - x \|_p, \;\;\; \forall x,y \in Q.
        \end{equation*}
    \end{lemma}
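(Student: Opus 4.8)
The plan is to reduce everything to a single integral representation of $\nabla f_\gamma$ over the sphere and then exploit the second-moment structure of the uniform distribution on the sphere to extract the $\sqrt{d}$ factor. First I would recall that, although $f$ itself is only Lipschitz, the averaged function $f_\gamma(x)=\mathbb{E}_{\tilde e}[f(x+\gamma\tilde e)]$ (with $\tilde e$ uniform on $B_2^d(1)$) is differentiable, and by the divergence theorem applied to the ball average its gradient admits the surface representation
\begin{equation*}
\nabla f_\gamma(x)=\frac{d}{\gamma}\,\mathbb{E}_{e\sim S_2^d(1)}\left[f(x+\gamma e)\,e\right].
\end{equation*}
For non-differentiable $f$ this identity is made rigorous by a standard mollification argument: approximate $f$ by smooth $M$-Lipschitz functions, for which the formula follows from componentwise integration by parts (the outward normal on $S_2^d(1)$ is $e$ itself), and pass to the limit.

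Having this, I would write the gradient increment as a single expectation. Since $\mathbb{E}_{e}[e]=0$ the formula is insensitive to additive constants, and directly
\begin{equation*}
\nabla f_\gamma(y)-\nabla f_\gamma(x)=\frac{d}{\gamma}\,\mathbb{E}_{e}\left[\left(f(y+\gamma e)-f(x+\gamma e)\right)e\right].
\end{equation*}
Writing $h(e):=f(y+\gamma e)-f(x+\gamma e)$, the crux is to bound $\|\mathbb{E}_e[h(e)e]\|_2$. Here I would use the dual representation $\|v\|_2=\max_{\|w\|_2=1}\dotprod{v}{w}$ together with Cauchy--Schwarz, which gives
\begin{equation*}
\left\|\mathbb{E}_e[h(e)e]\right\|_2=\max_{\|w\|_2=1}\mathbb{E}_e\left[h(e)\dotprod{e}{w}\right]\leq\max_{\|w\|_2=1}\sqrt{\mathbb{E}_e[h(e)^2]}\,\sqrt{\mathbb{E}_e\big[\dotprod{e}{w}^2\big]}.
\end{equation*}
The decisive point is that for $e$ uniform on the unit $l_2$-sphere one has $\mathbb{E}_e[ee^\top]=\tfrac1d I_d$, so that $\mathbb{E}_e[\dotprod{e}{w}^2]=\tfrac1d\|w\|_2^2=\tfrac1d$; this is precisely what turns the naive factor $d$ into $\sqrt{d}$.

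Finally I would invoke the $M$-Lipschitz continuity of $f$ to estimate $|h(e)|\leq M\|y-x\|_2$ uniformly in $e$, hence $\mathbb{E}_e[h(e)^2]\leq M^2\|y-x\|_2^2$. Combining the last three displays yields $\|\nabla f_\gamma(y)-\nabla f_\gamma(x)\|_2\leq\frac{d}{\gamma}\cdot\frac{1}{\sqrt d}\cdot M\|y-x\|_2=\frac{\sqrt d M}{\gamma}\|y-x\|_2$, which is the claim for $p=q=2$ (the case relevant to $l_2$ randomization, consistent with the paper's use of $M_2$). The main obstacle I anticipate is twofold: justifying the sphere-integral formula for a merely Lipschitz $f$ (handled by mollification), and, more importantly, recognizing that the improvement from $d$ to $\sqrt d$ must come from the covariance identity $\mathbb{E}_e[ee^\top]=\tfrac1d I_d$ rather than from a termwise Lipschitz bound, since bounding $|h(e)|$ and pulling $\|e\|_2=1$ outside the expectation would only deliver the weaker constant $dM/\gamma$.
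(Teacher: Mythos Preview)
The paper does not give its own proof of this lemma; it is quoted verbatim as Theorem~1 of \cite{Gasnikov_ICML} and used as a black box. So there is no in-paper argument to compare against. Your proposal is, in fact, the standard route taken in that reference: pass from the ball average to the sphere representation $\nabla f_\gamma(x)=\tfrac{d}{\gamma}\,\mathbb{E}_{e}[f(x+\gamma e)\,e]$ via the divergence theorem, subtract, and then exploit $\mathbb{E}_e[ee^\top]=\tfrac1d I_d$ through Cauchy--Schwarz to trade the factor $d$ for $\sqrt d$. The mollification step you mention to justify the surface formula for merely Lipschitz $f$ is also the standard device. For the Euclidean case $p=q=2$ your argument is complete and correct.

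One caveat worth flagging: you explicitly restrict to $p=q=2$, whereas the lemma is stated for general $(p,q)$ with $1/p+1/q=1$ and the $p$-Lipschitz constant $M$. Your Cauchy--Schwarz step controls $\|\cdot\|_2$ on the gradient side, so the stated $\|\cdot\|_q$ bound follows immediately only when $q\ge 2$ (equivalently $p\le 2$), via $\|v\|_q\le\|v\|_2$; for $p>2$ an extra dimension-dependent factor would appear. In this paper only the regime $p\in\{1,2\}$ is ever used (see the case split in the oracle-complexity estimate), so your restriction is harmless for all downstream results, but if you want to match the lemma exactly as written you should either add the one-line norm comparison for $q\ge 2$ or note that the general-$p$ statement with the same constant $\sqrt d$ is not what your argument delivers.
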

    
    \subsection{Gradient via $l_2$ randomization}\label{Subsection:l2_randomization}
    The gradient of $f_\gamma(x,\xi)$ can be estimated by the following approximation:
    \begin{equation}
        \nabla f_\gamma(x, \xi, e) = \frac{d}{2 \gamma} \left( f_\delta(x+ \gamma e, \xi) - f_\delta(x - \gamma e, \xi) \right) e,
    \end{equation}
    where $f_\delta(x,\xi)$ is gradient-free oracle from Definition \ref{def:gradient_free_oracle}, $e$ is a random vector uniformly distributed on $S_2^d(\gamma)$. The following lemma provides properties of the gradient $\nabla f_\gamma(x,\xi,e)$.
    
    \begin{lemma}[Lemma 2, \cite{Lobanov_2022}]\label{Lemma:sigma}
         Gradient $\nabla f_\gamma(x,\xi,e)$ has bounded variance (second moment) for all $x \in Q$
        \begin{equation*}
           \mathbb{E}_{\xi, e} \left[ \| \nabla f_\gamma (x, \xi, e) \|^2_q \right] \leq \kappa(p,d) \left( d M_2^2 + \frac{d^2 \Delta^2}{\sqrt{2} \gamma^2}  \right), 
        \end{equation*}
        where $1/p + 1/q = 1$ and 
        \begin{equation*}
            \kappa(p,d) = \sqrt{2} \min \left\{ q, \ln d \right\} d^{1 - \frac{2}{p}}.
        \end{equation*}
    \end{lemma}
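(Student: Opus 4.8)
The plan is to separate the contributions of the true function values and of the adversarial noise, and then to reduce both to moment estimates for the random direction $e$ on the Euclidean sphere. First I would substitute the gradient-free oracle of Definition \ref{def:gradient_free_oracle}, writing $f_\delta(x\pm\gamma e,\xi)=f(x\pm\gamma e,\xi)+\delta(x\pm\gamma e)$, so that $\nabla f_\gamma(x,\xi,e)$ decomposes into a signal part proportional to $f(x+\gamma e,\xi)-f(x-\gamma e,\xi)$ and a noise part proportional to $\delta(x+\gamma e)-\delta(x-\gamma e)$, each carrying the factor $\tfrac{d}{2\gamma}e$. The inequality $\|a+b\|_q^2\le 2\|a\|_q^2+2\|b\|_q^2$ then splits the second moment into two pieces that I can bound separately and recombine into the claimed sum $\kappa(p,d)\big(dM_2^2+\tfrac{d^2\Delta^2}{\sqrt2\gamma^2}\big)$.

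The noise part is the routine one: Assumption \ref{ass:bound_noise} gives $|\delta(x+\gamma e)-\delta(x-\gamma e)|\le 2\Delta$ deterministically, so this contribution is at most a constant times $\tfrac{d^2\Delta^2}{\gamma^2}\,\mathbb{E}_e\big[\|e\|_q^2\big]$, which already exhibits the required $d^2\Delta^2/\gamma^2$ scaling. The signal part is the delicate one. Here I would use the $M_2$-Lipschitz continuity of $f$ (Assumption \ref{ass:lipschitz_continuity} specialized to $p=2$) to control the symmetric finite difference; the temptation is to bound $|f(x+\gamma e,\xi)-f(x-\gamma e,\xi)|\le 2\gamma M_2(\xi)$ pointwise and factor it out, but this wastes an entire power of $d$ and yields $d^2M_2^2$ instead of the required $dM_2^2$. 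To recover the correct scaling one must exploit the averaging over $e$ rather than the worst case, so that the gain $\mathbb{E}_e[\langle v,e\rangle^2]=\|v\|_2^2/d$ for a fixed vector $v$ can be brought to bear, after which $\|\nabla f\|_2\le M_2$ and $\mathbb{E}[M^2(\xi)]\le M_2^2$ close the estimate.

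Everything then rests on a single spherical moment estimate: sharp bounds for $\mathbb{E}_e[\|e\|_q^2]$ and for the correlated moment $\mathbb{E}_e[\langle v,e\rangle^2\|e\|_q^2]$, in which passing from the $\ell_2$-geometry of the sphere to the $\ell_q$-norm of the estimator produces exactly the factor $\kappa(p,d)=\sqrt2\min\{q,\ln d\}\,d^{1-2/p}$; the power $d^{1-2/p}=d^{2/q-1}$ is the typical size of $\|e\|_q^2$ for a spread-out unit vector and the $\min\{q,\ln d\}$ captures the growth of the $q$-th coordinate moments, with the logarithm appearing as $q\to\infty$. I expect the signal part to be the main obstacle: the difficulty is that for a merely Lipschitz $f$ the almost-everywhere gradient direction co-varies with $e$, so the clean identity $\mathbb{E}_e[\langle v,e\rangle^2]=\|v\|_2^2/d$ cannot be applied to a fixed $v$ and must instead be replaced by a concentration/Poincar\'e-type argument on the sphere — this is precisely what removes the spurious extra factor of $d$ and pins down the constant $\kappa(p,d)$, whereas the oracle split and the worst-case noise bound are straightforward.
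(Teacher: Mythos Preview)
The paper does not actually prove this lemma: it is quoted verbatim as Lemma~2 of \cite{Lobanov_2022} and no argument is supplied, so there is no in-paper proof to compare against. That said, your outline is exactly the standard route used in that line of work (Shamir-type two-point bounds combined with $\ell_q$ moments of a uniform spherical vector): split via Definition~\ref{def:gradient_free_oracle} into a signal term and a noise term, bound the noise term pointwise by $2\Delta$ using Assumption~\ref{ass:bound_noise} to get the $d^2\Delta^2/\gamma^2$ piece, and for the signal term use that $e\mapsto f(x+\gamma e,\xi)-f(x-\gamma e,\xi)$ is an odd, $2\gamma M_2(\xi)$-Lipschitz function on $S_2^{d-1}$, so a spherical Poincar\'e/concentration bound gives $\mathbb{E}_e\big[(f(x+\gamma e,\xi)-f(x-\gamma e,\xi))^2\big]\lesssim \gamma^2 M_2(\xi)^2/d$, which is precisely the step that downgrades $d^2M_2^2$ to $dM_2^2$. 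Your diagnosis of this as the only nontrivial point is correct.

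The one place to be careful when you execute the plan is the correlated moment $\mathbb{E}_e\big[(f(x+\gamma e,\xi)-f(x-\gamma e,\xi))^2\,\|e\|_q^2\big]$: the two factors are not independent, so you cannot simply multiply $\mathbb{E}_e[\|e\|_q^2]$ by the Poincar\'e bound. In the cited proof this is handled by a fourth-moment/Cauchy--Schwarz step (or an explicit joint-moment computation), which costs the extra $\sqrt{2}$ in $\kappa(p,d)$; make sure you invoke that rather than treating the factors as decoupled.
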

    \begin{remark}\label{Remark:sigma}
        Using the fact that the second moment is the upper estimate of the variance for the unbiased gradient and assuming that $\Delta$ is sufficiently small we obtain the following estimate of the variance from Lemma \ref{Lemma:sigma}:
        \begin{equation*}
            \sigma^2 \leq 2 \sqrt{2} \min \left\{ q, \ln d \right\} d^{2 - \frac{2}{p}}  M_2^2.
        \end{equation*}
    \end{remark}
    
    \subsection{Zero-Order Stochastic Conditional Gradient Sliding Method}\label{Subsection:Zero_Order_Method}
    Now we present gradient-free algorithm (see Algorithm \ref{alg:ZOSCGS}) to solve problem \eqref{init_problem}. We chose Stochastic Conditional Gradient Sliding Method as accelerated batched Algorithm $\textbf{A}(L_{f_\gamma},\sigma^2)$. Substituting the approximation of the gradient via $l_2$ randomization $\nabla f_\gamma(x, \xi, e)$ ($\nabla f_\gamma(x, \psi)$ from Subsection \ref{Subsection:smoothing_intuition}, where $\psi = (\xi, e)$  is not only the random value $\xi$, but also the randomization on the $l_2$-sphere $e$, which was introduced in Subsection \ref{Subsection:l2_randomization}) instead of the exact gradient, we obtain a new ZO-SCGD Algorithm \ref{alg:ZOSCGS} to solve the non-smooth problem \eqref{init_problem}.
    
    \begin{algorithm}
        \caption{Zero-Order Stochastic Conditional Gradient Sliding (ZO-SCGS)}\label{alg:ZOSCGS}
        \textbf{Input}: Start point $x_0 \in Q$, maximum number of iterations $N \in \mathbb{Z}_+$.\\
        \hspace*{\algorithmicindent} Let stepsize $ \zeta_k \in [0,1]$, learning rate $\eta_k > 0$, accuracies $\beta_k$, batch size $B_k \in \mathbb{Z}_+$,\\
        \hspace*{\algorithmicindent} smoothing parameter~$\gamma>0$. \\
        \textbf{Initialization}: Generate independently vectors $e_1, e_2, ...$ uniformly distributed on unit \\   
        \hspace*{\algorithmicindent} $l_2$-sphere, and set $y_0 \gets x_0$
        \begin{algorithmic}[1]
        \For{$k=1,...,N$}
        \State $z_k \gets (1-\zeta_k)x_{k-1} + \zeta_k y_{k-1}$
        \State Sample $\{ e_1,...,e_{B_k} \}$ and $ \{ \xi_1,...,\xi_{B_k} \}$ independently
        \State $g_k \gets \frac{1}{B_k} \sum_{i=1}^{B_k} \left[ \frac{d}{2 \gamma} \left( f(z_k + \gamma e_i, \xi_i) - f(z_k - \gamma e_i, \xi_i) \right) e_i \right]$
        \State $y_{k} \gets \text{CG}(g_k, y_{k-1}, \eta_k, \beta_k)$ \Comment{See CG in Algorithm \ref{alg:CG}}
        \State $x_{k} \gets (1 - \zeta_k)x_{k-1} + \zeta_k y_{k}$
        \EndFor
        \end{algorithmic}
        \textbf{Output}: $x_N$.
    \end{algorithm}
    Algorithm \ref{alg:ZOSCGS} has such parameters as number of iterations $N$, batch size~$B$, stepsize $ \zeta$, learning rate $\eta$, accuracies $\beta$. The recommendations for selecting these parameters can be found in Theorem \ref{theorem_1}. To prove theorem we also need to know the values of the following parameters: constant of Lipschitz gradient $L_{f_\gamma} = \frac{2 \sqrt{d} M M_2}{\varepsilon}$ (by substituting $\gamma = \varepsilon/(2 M_2)$ in Lemma \ref{Lemma:Lipschitz_gradient}), where constant of Lipschitz continuity $M$ defined in Lemma \ref{Lemma:M_lipschitz_continuity} under Assumption \ref{ass:lipschitz_continuity}, and estimate of the variance $\sigma^2~\leq~2~\sqrt{2} \min \left\{ q, \ln d \right\} d^{2 - \frac{2}{p}}  M_2^2$ (from Remark \ref{Remark:sigma}).
    
    Next theorem provides estimates of the convergence rate of Algorithm \ref{alg:ZOSCGS}.
    
    \begin{theorem}\label{theorem_1}
        Let $\varepsilon$ be desired accuracy to solve problem \eqref{init_problem} and $\gamma$ be chosen as $\gamma = \varepsilon/(2 M_2)$. Let function $f(x, \xi)$ satisfy the Assumptions \ref{ass:lipschitz_continuity}, \ref{ass:convexity_Q_gamma} and~\ref{ass:bound_noise}. Then Zero-Order Stochastic Conditional Gradient Sliding algorithm (see Algorithm \ref{alg:ZOSCGS}) with $\zeta_k~=~3/(k+3)$, $\eta_k = 8 \sqrt{d} M M_2/(\varepsilon(k+3))$, $\beta_k=2 \sqrt{d} M M_2 D^2/(\varepsilon(k+1)(k+2))$, and $B_k=\left\lceil \min\{q,\ln~d\}d^{1-\frac{2}{p}} (k+3)^3 \varepsilon^2 /(MD)^2 \right\rceil$ achieves $\mathbb{E}\left[ f(x_k) \right]-f^*\leq\varepsilon$ after 
        \begin{equation*}
            N = \mathcal{O} \left( \frac{d^{1/4}\sqrt{MM_2} D}{\varepsilon} \right), \;\;\; T = \mathcal{O} \left( \frac{\min\{q,\ln~d\}d^{2-\frac{2}{p}} M_2^2  D^2 }{ \varepsilon^2} \right)
        \end{equation*}
        number of iterations and gradient-free oracle calls respectively. 
    \end{theorem}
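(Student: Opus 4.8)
The plan is to split the argument into a \emph{smoothing reduction} and then an invocation of the first-order Stochastic Conditional Gradient Sliding analysis of \cite{Lan_2016}, substituting the explicit smoothness constant and variance produced by the $l_2$ randomization only at the very end. The whole point of working with $f_\gamma$ rather than with $f$ is that the estimator $g_k$ in Algorithm \ref{alg:ZOSCGS} is unbiased for $\nabla f_\gamma$, not for $\nabla f$, so the accelerated first-order method can be run on the genuinely smooth surrogate.

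First I would reduce the non-smooth problem \eqref{init_problem} to the smooth problem \eqref{problem_smoothed}. The left inequality of Lemma \ref{Lemma:connect_f_with_f_gamma} gives $f(x_N)\le f_\gamma(x_N)$, while evaluating the right inequality at a minimizer $x^*\in Q$ of $f$ gives $f_\gamma^*\le f_\gamma(x^*)\le f^*+\gamma M_2$. Combining the two,
\begin{equation*}
\mathbb{E}\left[ f(x_N) \right]-f^* \le \left(\mathbb{E}\left[ f_\gamma(x_N) \right]-f_\gamma^*\right)+\gamma M_2 .
\end{equation*}
Choosing $\gamma=\varepsilon/(2M_2)$ makes the bias term equal to $\varepsilon/2$, so it remains only to drive the smoothed gap $\mathbb{E}[f_\gamma(x_N)]-f_\gamma^*$ below $\varepsilon/2$.

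Next I would check that $f_\gamma$ meets the hypotheses of the accelerated method and then instantiate it. Lemma \ref{Lemma:Lipschitz_gradient} yields $L_{f_\gamma}=\sqrt{d}M/\gamma=2\sqrt{d}MM_2/\varepsilon$ after inserting $\gamma=\varepsilon/(2M_2)$, and Remark \ref{Remark:sigma} (valid once $\Delta$ is small enough to discard the $d^2\Delta^2/\gamma^2$ term of Lemma \ref{Lemma:sigma}) gives $\sigma^2\le 2\sqrt{2}\min\{q,\ln d\}d^{2-2/p}M_2^2$. With these constants, Algorithm \ref{alg:ZOSCGS} is exactly the method $\textbf{A}(L_{f_\gamma},\sigma^2)$ applied to $f_\gamma$, and the schedule $\zeta_k=3/(k+3)$, $\eta_k=4L_{f_\gamma}/(k+3)$, $\beta_k=L_{f_\gamma}D^2/((k+1)(k+2))$ is precisely the one required by the convergence theorem of \cite{Lan_2016}. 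That theorem produces an accelerated bias decay of order $L_{f_\gamma}D^2/N^2$ plus a stochastic term that is a weighted sum of the per-iteration variances $\sigma^2/B_k$, and the growing batch $B_k\propto(k+3)^3$ is chosen so that this weighted sum collapses to the same $L_{f_\gamma}D^2/N^2$ order. Setting the bound to $\varepsilon/2$ and substituting $L_{f_\gamma}=2\sqrt{d}MM_2/\varepsilon$ gives $N=\mathcal{O}\!\left(\sqrt{L_{f_\gamma}D^2/\varepsilon}\right)=\mathcal{O}\!\left(d^{1/4}\sqrt{MM_2}\,D/\varepsilon\right)$.

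Finally, the oracle complexity follows by summing the batches, $T=\sum_{k=1}^N B_k$. Because $B_k$ grows cubically, $\sum_{k=1}^N B_k=\mathcal{O}(N^4)$ times the coefficient $\min\{q,\ln d\}d^{1-2/p}\varepsilon^2/(MD)^2$; inserting $N^4=\mathcal{O}\!\left(d(MM_2)^2D^4/\varepsilon^4\right)$ cancels the $M$, $D$ and $\varepsilon$ powers and leaves $T=\mathcal{O}\!\left(\min\{q,\ln d\}d^{2-2/p}M_2^2D^2/\varepsilon^2\right)$, as claimed. The main obstacle I anticipate is neither the reduction nor this bookkeeping but the faithful reconstruction of the stochastic sliding analysis: one must carry the accumulated gradient-noise term through the accelerated outer iteration and the inner conditional-gradient subroutine and verify that the cubic batch schedule exactly offsets it, so that the optimal $1/N^2$ bias rate survives while the total sample count stays at the $N^4$ scale, all while the inner CG procedure is run only to the prescribed accuracy $\beta_k$.
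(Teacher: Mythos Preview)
Your proposal is correct and follows essentially the same route as the paper: reduce via Lemma~\ref{Lemma:connect_f_with_f_gamma} with $\gamma=\varepsilon/(2M_2)$, plug $L_{f_\gamma}=2\sqrt{d}MM_2/\varepsilon$ and the variance bound of Remark~\ref{Remark:sigma} into the SCGS convergence guarantee of \cite{Lan_2016}, solve for $N$, and sum $B_k\sim k^3$ to get $T\sim N^4\cdot\min\{q,\ln d\}d^{1-2/p}\varepsilon^2/(MD)^2$. The paper does not reconstruct the stochastic sliding analysis either---it simply quotes the bound $\mathbb{E}[f_\gamma(x_k)]-f_\gamma^*\le 7.5\,L_{f_\gamma}D^2/((k+1)(k+2))$ from \cite{Lan_2016} as a black box---so the ``main obstacle'' you anticipate is not actually part of the proof here.
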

    See Appendix \ref{Appendix} for detailed proof.  
    
    The results of Theorem \ref{theorem_1} show that Zero-Order Stochastic Conditional Gradient Sliding algorithm converges with $\varepsilon$-accuracy in $ N \sim d^{1/4} \varepsilon^{-1}$~iterations. The number of solutions to linear optimization problems, also known as the linear minimization oracle (LMO), is $\mathcal{O}\left( \sqrt{d} \varepsilon^{-2} \right)$. Batch size $B_k \in \mathbb{Z}_+$ must be chosen integer, so in Theorem \ref{theorem_1} $\left\lceil \cdot \right\rceil$ denotes the whole part of the next integer number. The number of oracle calls $T$ requiring the Algorithm \ref{alg:ZOSCGS} to solve a non-smooth problem \eqref{init_problem} with $\varepsilon$-accuracy is $ T \sim  \min\{q,\ln~d\}d^{2-\frac{2}{p}} \varepsilon^{-2}$.
    \begin{remark}[Smooth setting] \label{Remark:smooth_setup}
        In Theorem~\ref{theorem_1}, we presented the convergence results of Algorithm~\ref{alg:ZOSCGS} in the non-smooth setting, since in this paper we focus on solving non-smooth convex stochastic optimization problems. However, the algorithm proposed in this paper is robust to the smooth setting as well. To obtain similar estimates of the algorithm for smooth setting, it is sufficient not to change constant of Lipschitz gradient (i.e., it is not necessary to substitute the value obtained in Lemma \ref{Lemma:Lipschitz_gradient}). Therefore, Algorithm \ref{alg:ZOSCGS} with parameters $\zeta_k~=~3/(k+3)$, $\eta_k = 4 L/(k+3)$, $\beta_k= L D^2/((k+1)(k+2))$, and $B_k=\left\lceil \min\{q,\ln~d\}d^{2-\frac{2}{p}} M_2^2 (k+3)^3 /(LD)^2 \right\rceil$ achieves $\mathbb{E}\left[ f(x_k) \right]-f^*\leq\varepsilon$ after $N \sim \varepsilon^{-1/2}$ iterations, performs $\sim \varepsilon^{-1}$ linear optimization and requires $T \sim \min\{q,\ln~d\}d^{2-\frac{2}{p}} \varepsilon^{-2}$ gradient-free oracle calls.
    \end{remark}
    \begin{remark}
        In Subsection \ref{Subsection:Zero_Order_Method}, we focus on obtaining optimal estimates of iterative~$N$ and oracle~$T$ complexities, so in proving the Theorem~\ref{theorem_1} we considered the case~$\Delta=0$. However, an optimal estimate of the maximum adversarial noise can be obtained by performing a similar convergence analysis of the Stochastic Conditional Gradient Sliding Method for the biased stochastic oracle (see example analysis in \cite{Gorbunov_2019}). For brevity, we omit this analysis, stating that the estimate of maximum adversarial noise is $\Delta \lesssim \varepsilon^2 d^{-1/2}$ for gradient-free algorithms created by applying smoothing scheme via $l_2$ randomization (see e.g. \cite{Dvinskikh_2022,Gasnikov_ICML}).
    \end{remark}

\section{Discussion}\label{section:Discussion}
    As far as we know, Zero-Order Stochastic Conditional Gradient Sliding (ZO-SCGS) is the first gradient-free conditional gradient-type algorithm that solves a non-smooth convex stochastic optimization problem \eqref{init_problem}. This algorithm, as Theorem \ref{theorem_1} shows, is robust for solving non-smooth black-box problems. But most interestingly, this algorithm is also robust for smooth black box problems, because it is superior in terms of the number of oracle calls to the state of the art algorithms. For instance, the Zeroth-Order Stochastic Conditional Gradient Method (ZSCG) from \cite{Balasubramanian_2022}, which is a SOTA algorithm, has the following oracle complexity of $T \sim d\; \varepsilon^{-3}$ in any setting, while Algorithm \ref{alg:ZOSCGS} has oracle complexity of $T \sim d\; \varepsilon^{-2}$ in the Euclidean setting $p=2$ $(q=2)$ and $T \sim \ln (d)\; \varepsilon^{-2}$ in the simplex setting $p=1$ $(q = \infty)$. One reason for the advantage of our algorithm may be that the ZSCG method uses Direct Finite Difference (FFD), while the ZO-SCGS method (see Algorithm \ref{alg:ZOSCGS}) uses Central Finite Difference (CFD). It is worth noting that \cite{Scheinberg_2022} explains why it is worth estimating the gradient via central finite difference. Another possible reason may be the choice of Gaussian smoothing instead of smoothing via $l_2$ randomization, because in practical examples it often happens that the algorithm whose gradient is approximated over $l_2$~randomization works better than the algorithm whose gradient is approximated over Gaussian smoothing. Last but not least, a possible reason is that the Zeroth-Order Stochastic Conditional Gradient Method (ZSCG) used the unaccelerated first-order Stochastic Frank--Wolfe (SFW) method of \cite{Hazan_2016} as its base. Since the Stochastic Conditional Gradient Method already has an estimate on the number of calls to the stochastic gradient as $\sim \varepsilon^{-3}$. It is hard to expect an improvement in estimate of oracle complexity when creating a gradient-free method based on it. Therefore, in this paper we created an optimal gradient-free method based on an accelerated batched first-order algorithm. So far we have observed theoretical advantages of Algorithm 1 (robust for solving non-smooth black box problems) in terms of oracle complexity over SOTA algorithms, which are robust for solving smooth black box problems. Therefore, in Section \ref{section:Experiments} we will verify our theoretical results with a model example of a convex stochastic optimization problem in a smooth setting.

\setcounter{footnote}{0}

\section{Experiments}\label{section:Experiments}
    In this section we focus on verifying our theoretical results obtained in Section~\ref{section:Main_Result} via experiments\footnote{Code repository link: \url{https://github.com/htower/zo-scgs}}. In particular, we  numerically compare the Zero-Order Stochastic Conditional Gradient Sliding Method (ZO-SCGS) proposed in this paper (see Algorithm \ref{alg:ZOSCGS}) with the Zeroth-Order Stochastic Conditional Gradient Method (ZSCG) from~\cite{Balasubramanian_2022}. We consider a standard model example of a black box problem in a smooth setting, which has the following form:
    \begin{equation*}
        \min_{x \in Q} f(x) := \frac{1}{2}\dotprod{x}{Ax} - \dotprod{b}{x},
    \end{equation*}
    where $Q = \left\{ x \in \mathbb{R}^d \;:\; \| x \|_1 = 1, x \geq 0  \right\}$ is a simplex set, $A \in \mathbb{R}^{d \times d}$ is a random positively determined matrix, $b \in \mathbb{R}^d$ is a vector such that $b = A x_*$, and $x_*$ is a solution to the problem $x_* = \argmin_{x \in Q} f(x)$. In all tests, the dimensionality of the problem is $d = 100$, we fix the maximum number of calls to the gradient-free oracle $T_{max} = 10^7$, and the parameters of the algorithms are taken according to theoretical recommendations: for instance, parameters for Algorithm~\ref{alg:ZOSCGS}, see Remark \ref{Remark:smooth_setup}, and parameters for Zeroth-Order Stochastic Conditional Gradient Method, see~\cite{Balasubramanian_2022}. In Figure~\ref{Figure_theory} we compare the ZO-SCGS method with the ZSCG method. In particular, Figure~\ref{Figure_theory_a} shows the dependence of the optimal error~($f(x_k) - f^*$) on the number of calls of the gradient-free oracle~$T$. And Figure~\ref{Figure_theory_b} examines the dependence of the optimal error~($f(x_k) - f^*$) on the number of iterations $N$. We observe that Algorithm~\ref{alg:ZOSCGS} significantly outperforms the ZSCG method in the number of oracle calls. Also, when the maximal value of the gradient-free oracle call is fixed, we see that the Algorithm~\ref{alg:ZOSCGS} is first inferior to the ZSCG method in the number of iterations.
    \begin{figure}[H]
    	\centering
    	\begin{subfigure}{0.49\textwidth}
    		\includegraphics[width=\textwidth]{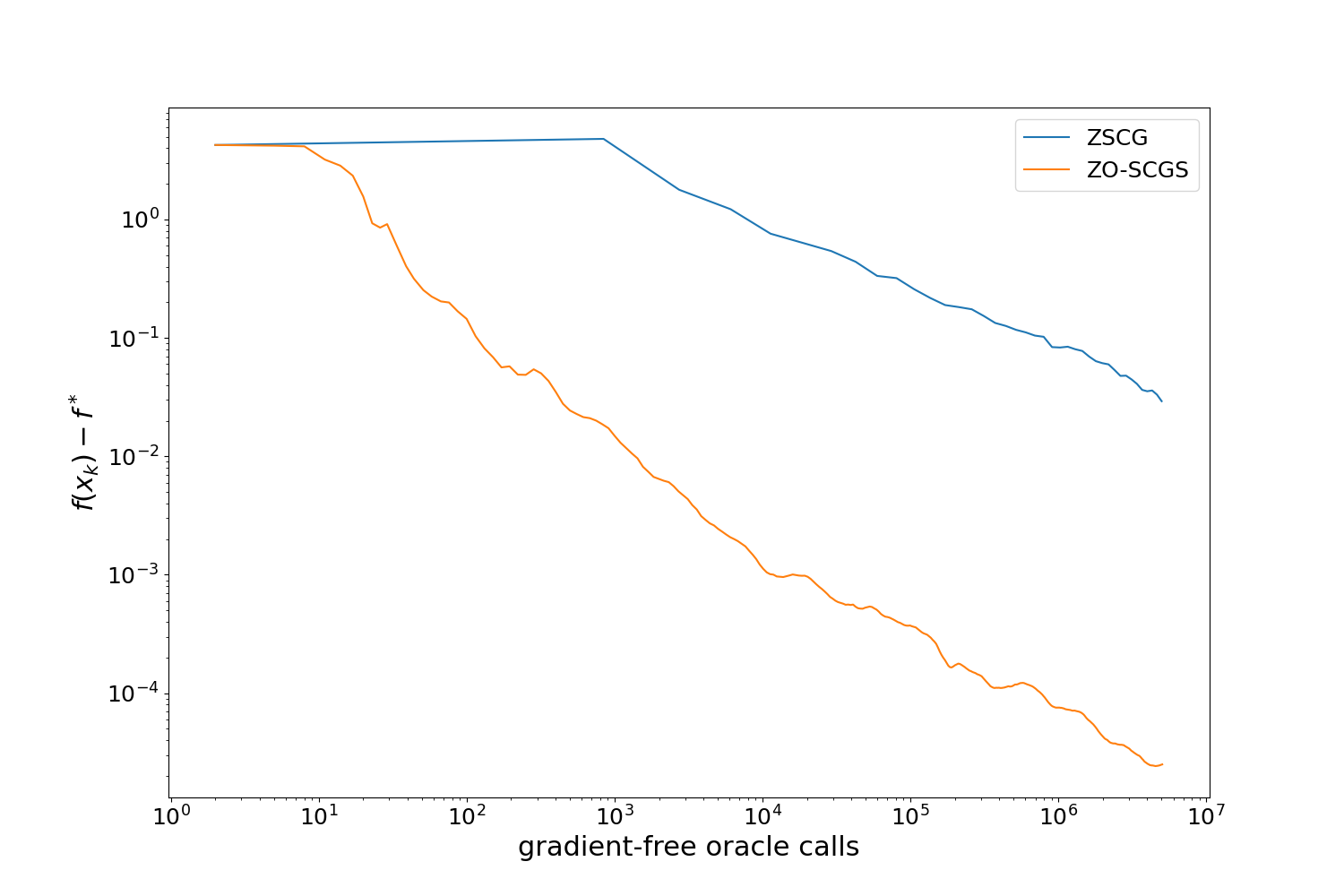}
    		\caption{}
    		\label{Figure_theory_a}
    	\end{subfigure}
    	\begin{subfigure}{0.49\textwidth}
    		\includegraphics[width=\textwidth]{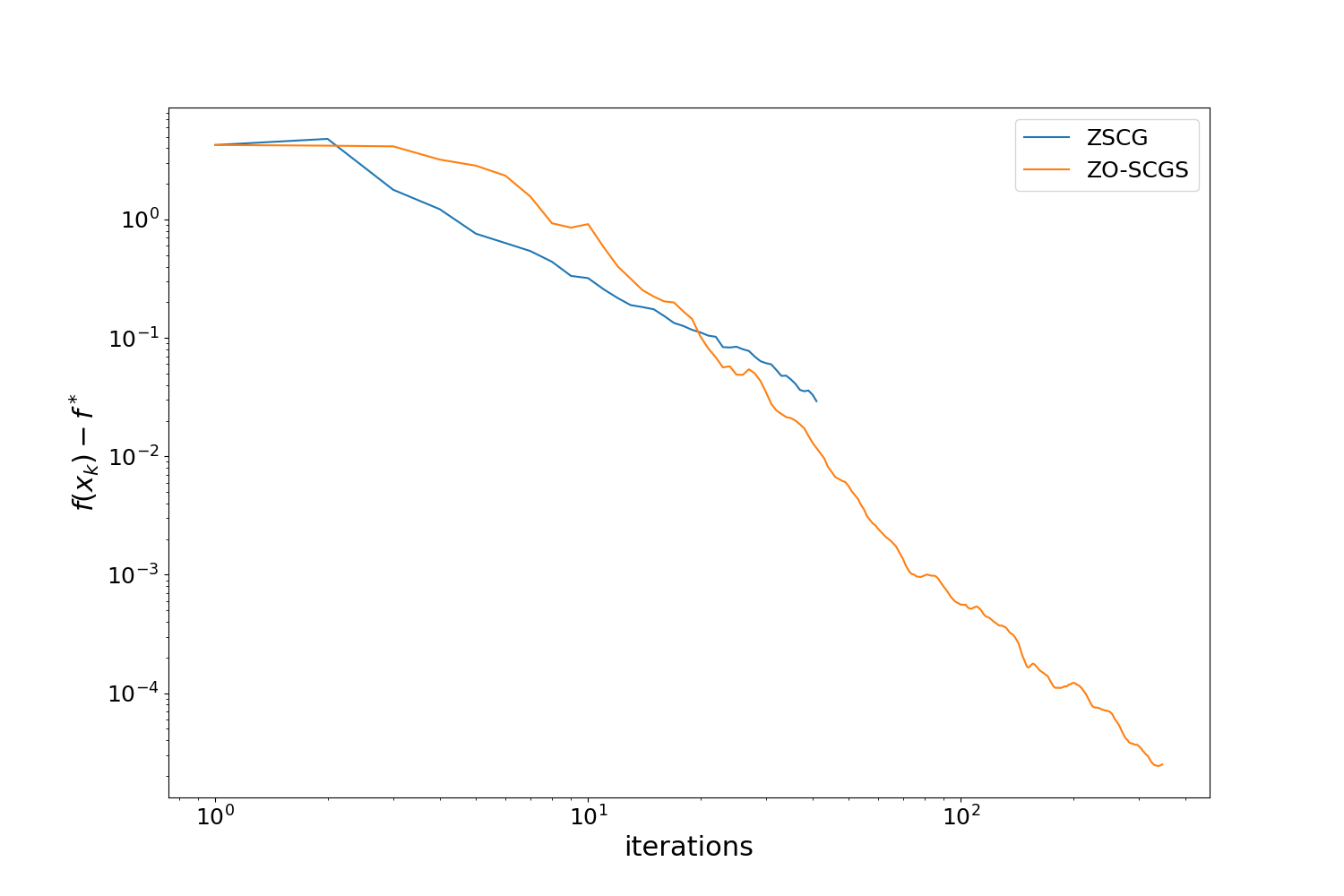}
    		\caption{}
    		\label{Figure_theory_b}
    	\end{subfigure}
    	\caption{Comparison of convergence result of Algorithm \ref{alg:ZOSCGS} with ZSCG method~\cite{Balasubramanian_2022}. }
    	\label{Figure_theory}
    \end{figure}

    According to theoretical estimates for the ZO-SCGS and ZSCG methods, the batch size should be taken at a large size, which is a disadvantage of these algorithms. In Figure \ref{Figure_fixed} we compare Algorithm 1 with ZSCG methods using the fixed batch-size~$B_k=100$. Figure~\ref{Figure_fixed_a} shows the dependence of the optimal error~($f(x_k) - f^*$) on the number of calls of the gradient-free oracle~$T$. And Figure~\ref{Figure_fixed_b} examines the dependence of the optimal error~($f(x_k) - f^*$) on the number of iterations $N$. We see that for a fixed (small) batch size, both algorithms have convergence, which is a positive result for practical experiments to use. We also see that ZO-SCGS and ZSCG methods require the same number of calls to the gradient-free oracle, since we have fixed the batch size in contrast to Figure \ref{Figure_theory}. We can also observe that Algorithm 1 significantly outperforms the method both in the number of to gradient-free oracle calls  and in iterations.
    \begin{figure}[H]
    	\centering
    	\begin{subfigure}{0.49\textwidth}
    		\includegraphics[width=\textwidth]{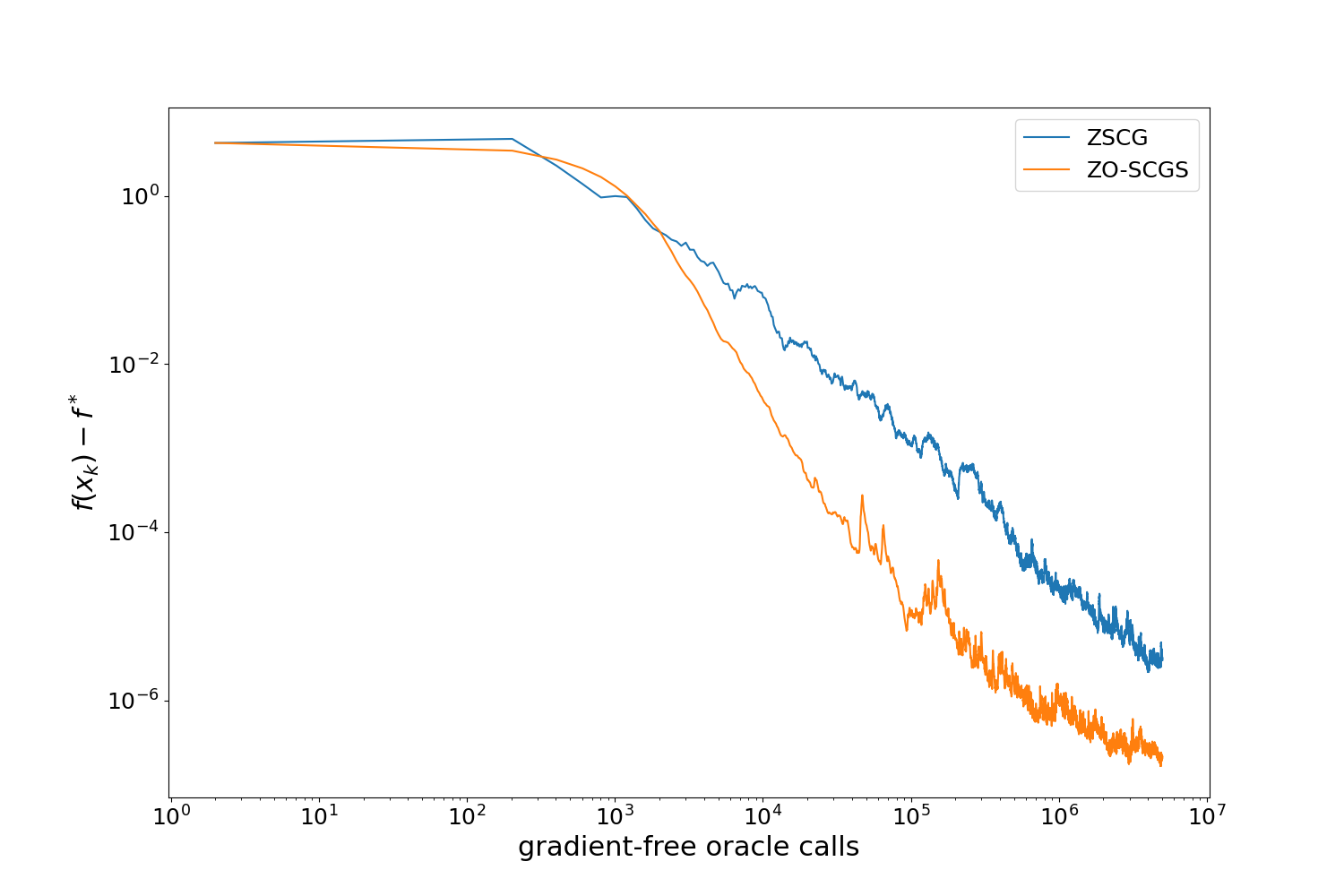}
    		\caption{}
    		\label{Figure_fixed_a}
    	\end{subfigure}
    	\begin{subfigure}{0.49\textwidth}
    		\includegraphics[width=\textwidth]{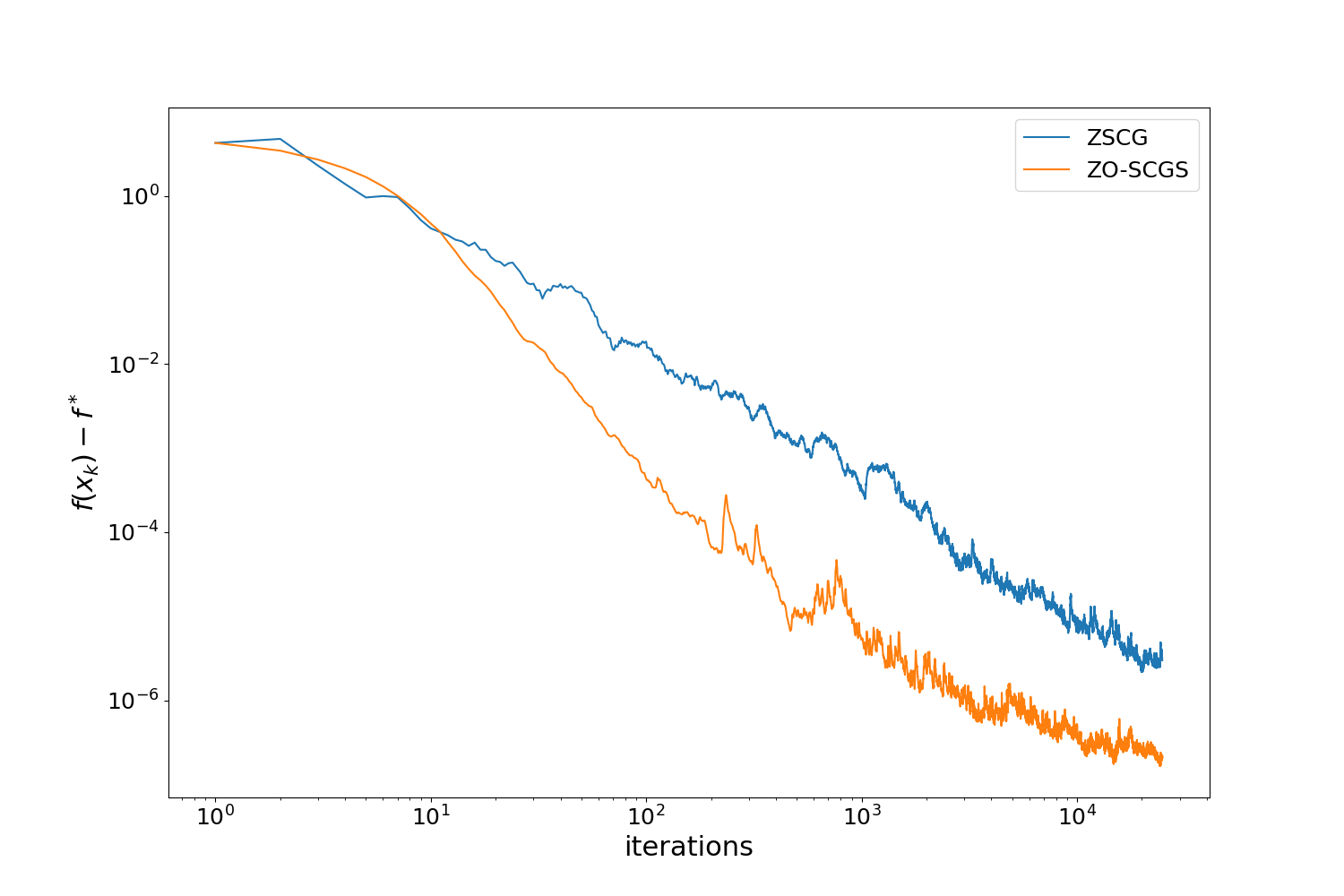}
    		\caption{}
    		\label{Figure_fixed_b}
    	\end{subfigure}
    	\caption{Comparison of convergence result of algorithms with fixed batch size. }
    	\label{Figure_fixed}
    \end{figure}
    
    \begin{figure}[t!]
    	\centering
    	\begin{subfigure}{0.49\textwidth}
    		\includegraphics[width=\textwidth]{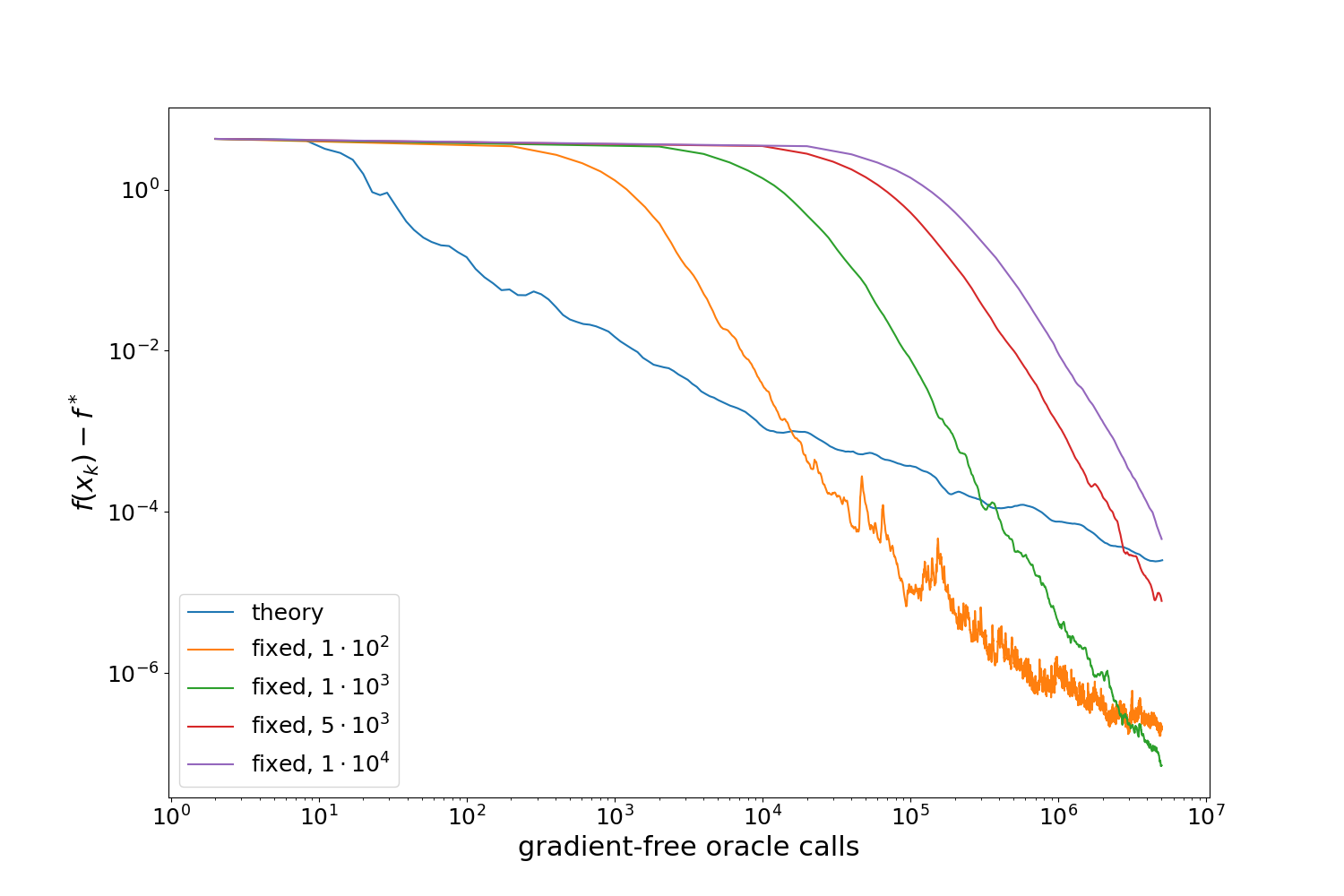}
    		\caption{}
    		\label{Figure_compare_a}
    	\end{subfigure}
    	\begin{subfigure}{0.49\textwidth}
    		\includegraphics[width=\textwidth]{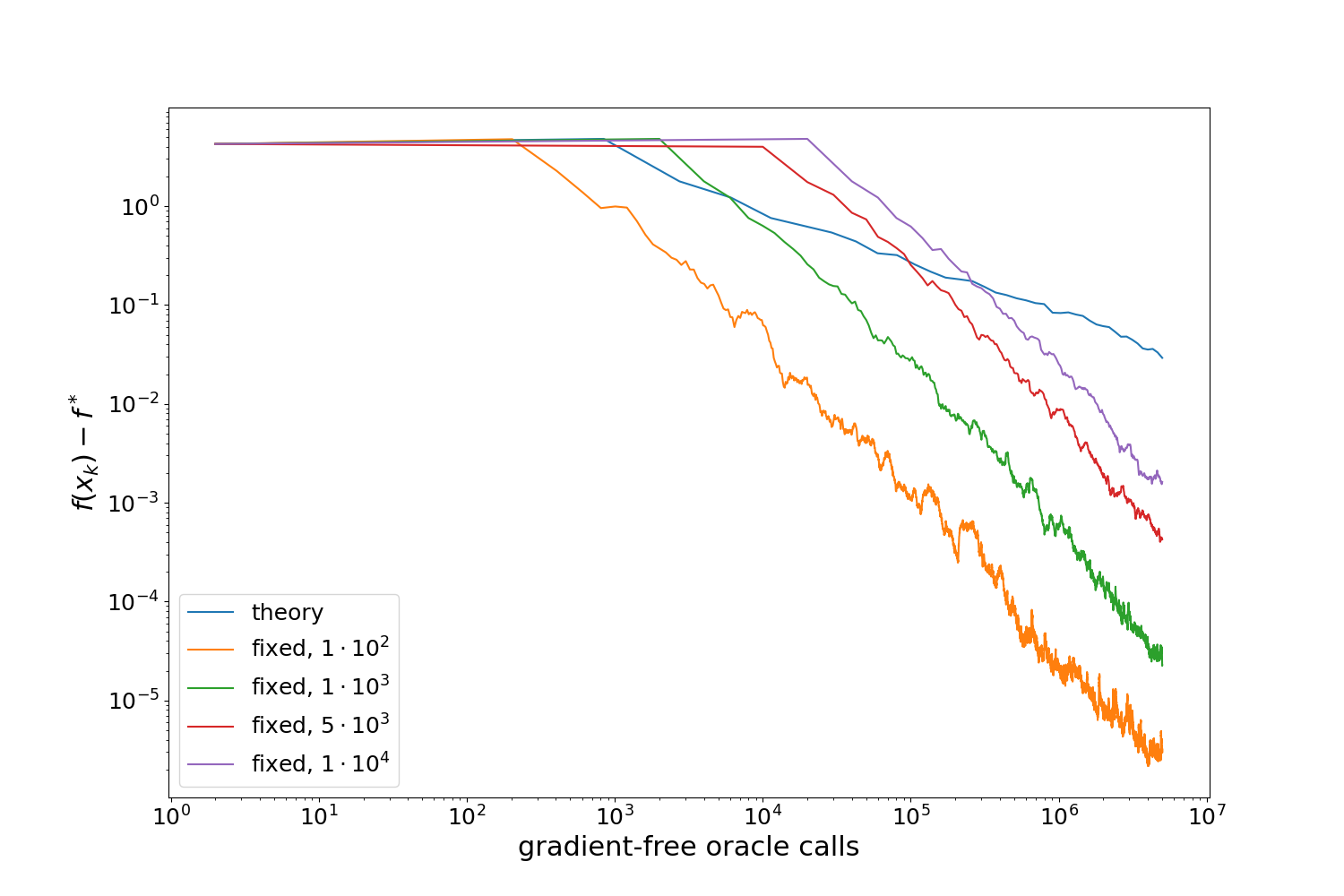}
    		\caption{}
    		\label{Figure_compare_b}
    	\end{subfigure}
    	\caption{Effect of the batch size parameter $B_k$ on convergence results.}
    	\label{Figure_compare}
    \end{figure}
    
    Figure \ref{Figure_compare} shows the effect of the batch size parameter $B_k$ on convergence. Where 'theory' means that the batch size corresponds to theoretical estimates, 'fixed $b$' means that the batch size corresponds to the value of $b$. Figure \ref{Figure_compare_a} explores the dependence of Zero-Order Stochastic Conditional Gradient Sliding (ZO-SCGS) on batch size $B_k$, and Figure \ref{Figure_compare_b} explores the dependence of the Zeroth-Order Stochastic Conditional Gradient Method (ZSCG) on batch size~$B_k$. We see that theoretical estimates of the batch size slow down the convergence rate of both methods. And we can also observe a tendency that the smaller the batch size, the faster the algorithms converge. However, it is worth observing the golden mean, because with a very small batch size the positive convergence effect will not be observed, as well as with a very large batch size.

\section{Conclusion}\label{section:Conclusion}
    We presented, as far as we know, the first gradient-free algorithm of the conditional gradient type, which is robust for solving non-smooth convex stochastic optimization problems (black-box problems in a non-smooth setting). Using a smoothing scheme with $l_2$ randomization and basing on an accelerated batched first-order algorithm, we showed that Zero-Order Stochastic Conditional Gradient Sliding (ZO-SCGS) is the optimal algorithm for three criteria: total number of iterations, oracle complexity, and maximum adversarial noise. Our theoretical results show that Algorithm \ref{alg:ZOSCGS} is a robust method not only for non-smooth black box problems, but also for black box problems with a smooth setting. We verified our theoretical results on a practical experiment in a smooth setup by comparing our algorithm with the state of the art algorithm. We have shown that using a fixed (small enough) batch size achieves better accuracy than with batch size derived from theoretical estimates.  

%
%
%
%
\newpage

\newpage
\appendix

\section{Proof Theorem \ref{theorem_1}}\label{Appendix}
    Before giving the proof of Theorem \ref{theorem_1}, we prove the following statement from the last part of Subsection \ref{Subsection:smoothing_intuition}: to solve a non-smooth problem with $\varepsilon$-accuracy, it is necessary to solve a smooth problem with $(\varepsilon/2)$-accuracy, where $\varepsilon$-suboptimality is the accuracy of the solution in terms of expectation.\\
    \textbf{Connection between Problems \eqref{init_problem} and \eqref{problem_smoothed}}. Using Lemma \ref{Lemma:connect_f_with_f_gamma} we obtain
    \begin{eqnarray*}
        f(x^{N+1}) - f(x_{*}) &\overset{\circledOne}{\leq}& f_\gamma(x^{N+1}) - f(x_{*}) \overset{\circledTwo}{\leq} f_\gamma(x^{N+1}) - f_\gamma(x_{*}) + \gamma M_2  \\
        &\leq& f_{\gamma}(x^{N+1}) - f_\gamma(x_{*}(\gamma)) + \gamma M_2 \leq \frac{\varepsilon}{2 } + \frac{\varepsilon}{2} =\varepsilon,
    \end{eqnarray*}
    where $\circledOne$ means the first inequality of the Lemma \ref{Lemma:connect_f_with_f_gamma} and $\circledTwo$ means the second inequality of the Lemma \ref{Lemma:connect_f_with_f_gamma}. Thus, if we solve a smooth problem \eqref{problem_smoothed} with $\varepsilon/2$~accuracy ($f_{\gamma}(x^{N+1}) - f_\gamma(x_{*}(\gamma)) \leq \varepsilon/2$), we solve a non-smooth problem \eqref{init_problem} with $\varepsilon$-accuracy ($f(x^{N+1}) - f(x_{*}) \leq \varepsilon$) when we choose the gamma parameter~$\gamma = \varepsilon/ (2 M_2)$. 
    
    First, let's define the procedure CG (see Algorithm \ref{alg:CG}) to which Algorithm \ref{alg:ZOSCGS} refers. The Conditional Gradient procedure has the following input parameters: $g_0, u_0, \eta, \beta$ which correspond to parameters: $g_k, y_{k-1}, \eta_k, \beta_k$, where $k$ is number of iteration for Algorithm \ref{alg:ZOSCGS}. The CG procedure outputs the following value $u$.
    
    \begin{algorithm}
        \caption{Conditional Gradient procedure ($u \gets \text{CG}(g_0, u_0, \eta, \beta)$)}\label{alg:CG}
        \begin{algorithmic}[1]
        \For{$t=0,...,T$}
        \State $v_t \gets \text{argmin}_{v \in Q} \dotprod{g_t}{v}$
        \If {$\dotprod{g_t}{u_t - v_t} \leq \beta$}
        \State \Return $u_t$
        \EndIf
        \State $\alpha_t \gets \min \left\{ \frac{\dotprod{g_t}{u_t - v_t}}{\eta \| u_t - v_t \|^2}, 1 \right\}$
        \State $u_{t+1} \gets u_t + \alpha_t (v_t - u_t)$
        \State $g_{t+1} \gets g_0 + \eta (u_{t+1} - u_0)$
        \EndFor
        \end{algorithmic}
    \end{algorithm}
    We now obtain the convergence rate of Algorithm \ref{alg:ZOSCGS}: Zero-Order Stochastic Conditional Gradient Sliding (ZO-SCGS) to solve the non-smooth problem \eqref{init_problem}.\\
    \textbf{Convergence rate for Algorithm \eqref{alg:ZOSCGS}}. Let us write out the convergence rate for the Stochastic Conditional Gradient Sliding algorithm from \cite{Lan_2016}:
    \begin{equation}\label{eq:SCGS}
        \mathbb{E} \left[ f_\gamma(x_k) \right] - f_\gamma(x_*(\gamma)) \leq \frac{7.5 L D^2}{(k+1)(k+2)},
    \end{equation}
    where $L$ is a constant of Lipschitz gradient. Then, in order to obtain the convergence rate for Algorithm \ref{alg:ZOSCGS}, we use the Smoothing Scheme (see Subsection \ref{Subsection:smoothing_intuition} for more details). Substituting $L = L_{f_\gamma} = \frac{2 \sqrt{d} M M_2}{\varepsilon}$ (from Lemma \ref{Lemma:Lipschitz_gradient}) and $\sigma^2~\leq~2~\sqrt{2} \min \left\{ q, \ln d \right\} d^{2 - \frac{2}{p}}  M_2^2$ (from Remark \ref{Remark:sigma}) in \eqref{eq:SCGS} we obtain the convergence rate for Zero-Order Stochastic Conditional Gradient Sliding (ZO-SCGS):
    \begin{equation}\label{convergence_rate}
        \mathbb{E} \left[ f(x_k) \right] - f^* \leq \frac{15 \sqrt{d} M M_2 D^2}{\varepsilon (k+1)(k+2)}.
    \end{equation}
    We turn to estimates of  number of iterations and  call of the gradient-free oracle.\\
    \textbf{Iterative and oracular complexities.} We first find an estimate of the number of iterations $N$. To do this, we assume that the Algorithm \ref{alg:ZOSCGS} achieves $\varepsilon$-accuracy after $N$ iterations, then from \eqref{convergence_rate} we obtain:
    \begin{equation*}
        \frac{15 \sqrt{d} M M_2 D^2}{\varepsilon (N+1)(N+2)} \leq \varepsilon \;\;\; \Rightarrow \;\;\; N^2 \gtrsim \frac{15 \sqrt{d} M M_2 D^2}{\varepsilon^2} \;\;\; \Rightarrow
    \end{equation*}
    \begin{equation}\label{iteration_complexity}
        \Rightarrow \;\;\; N = \mathcal{O}\left( \frac{ d^{1/4} \sqrt{M M_2} D}{\varepsilon} \right).
    \end{equation}
    
    Next, we find the number of calls of the gradient-free oracle (see Definition~\ref{def:gradient_free_oracle}):
    \begin{eqnarray*}
        T = \sum_{k=1}^N B_k &=& \sum_{k=1}^N \frac{\min\{q,\ln~d\}d^{1-\frac{2}{p}} (k+3)^3 \varepsilon^2 }{(MD)^2} \\
        &\overset{\circledThree}{\simeq}& \frac{\min\{q,\ln~d\}d^{1-\frac{2}{p}} N^4 \varepsilon^2 }{4(MD)^2}\\ &\overset{\eqref{iteration_complexity}}{=}& \mathcal{O} \left( \frac{\min\{q,\ln~d\}d^{1-\frac{2}{p}} d M_2^2 M^2 D^4 \varepsilon^2 }{(MD)^2 \varepsilon^4} \right)\\
        &=& \mathcal{O} \left( \frac{\min\{q,\ln~d\}d^{2-\frac{2}{p}} M_2^2  D^2 }{ \varepsilon^2} \right)\\
        &=& 
            \begin{cases}
                 \mathcal{O} \left(  \frac{d M_2^2 D^2}{ \varepsilon^2}   \right), & p = 2 \;\;\; (q = 2);\\
                \mathcal{O} \left(  \frac{(\ln d) M_2^2 D^2}{ \varepsilon^2}   \right), & p = 1 \;\;\; (q = \infty),
            \end{cases}
    \end{eqnarray*}
    where in $\circledThree$ we use auxiliary fact that $\sum_{k=1}^N k^3 \simeq \frac{1}{4} N^4$. 
\end{document}